\documentclass[12pt,reqno]{amsart}
\usepackage[T1]{fontenc}
\usepackage{lmodern}
\usepackage{microtype}
\usepackage{amsmath,amssymb,mathtools}
\usepackage{enumitem}
\usepackage{hyperref}
\usepackage[margin=1in]{geometry}

\hypersetup{
  colorlinks=true,
  linkcolor=blue,
  citecolor=blue,
  urlcolor=blue
}

\newtheorem{theorem}{Theorem}[section]
\newtheorem{lemma}[theorem]{Lemma}
\newtheorem{proposition}[theorem]{Proposition}
\newtheorem{corollary}[theorem]{Corollary}
\newtheorem{conjecture}[theorem]{Conjecture}
\theoremstyle{remark}

\newcommand{\dout}{d^{+}}
\newcommand{\din}{d^{-}}

\title[A $\sqrt{2}$-Approximation
to the Bilu-Linial Conjecture]
{A $\sqrt{2}$-Approximation to the Bilu-Linial Conjecture}
\author{Zejun Huang}
\date{July 2026\\
{\it\indent Mathematics Subject Classification:} 05C22, 05C50\\
  {\it\indent Keywords:} signed graph, spectral radius, Bilu-Linial conjecture,  balanced orientation, Ramanujan graph}

\begin{document}

\begin{abstract}
   Bilu and Linial conjectured that
 if  $d\ge 2$, then every $d$-regular graph $G$ has an edge signing
$\sigma:E(G)\to\{-1,1\}$ such that its signed adjacency matrix $A_\sigma$ satisfies the Ramanujan bound
\begin{equation*}
  \rho(A_\sigma)\le 2\sqrt{d-1}
\end{equation*} and they proved that
\begin{equation*}
  \rho(A_\sigma)=O\!\left(\sqrt{d\log^3 d}\right).
\end{equation*}
 Using a method of interlacing polynomials,  Marcus, Spielman, and Srivastava confirmed one side of this conjecture that there is a signing $\sigma$ for which
 \[
    \lambda_{\max}(A_\sigma)\le 2\sqrt{d-1}.
\]
By constructing an auxiliary bipartite graph from a  balanced orientation of $G$ and applying  a bipartite signing argument,
we prove that every finite simple graph $G$ of maximum degree $d\ge 3$
admits an edge signing $\sigma$ such that
\[
    \rho(A_\sigma)
    \le 4\sqrt{\left\lceil \frac d2\right\rceil-1}
    \le 2\sqrt{2(d-1)}.
\]
Thus we obtain a    bound   within a factor at most $\sqrt2$ of the conjectured Ramanujan bound.

\end{abstract}

\maketitle

\section{Introduction and main results}

A  $d$-regular graph is called a {\it $\lambda$-expander} if all its eigenvalues except the trivial one  $d$ lie in $[-\lambda,\lambda]$.
Expander graphs are highly connected sparse graphs that  have important applications in coding theory and theoretical computer science;   see the surveys of
Hoory, Linial, and Wigderson~\cite{HLW} and Lubotzky~\cite{LubotzkySurvey}.

The Alon-Boppana bound states that in every infinite family of connected $d$-regular graphs whose orders tend to infinity, the second largest adjacency eigenvalue has a lower limit at least $2\sqrt{d-1}$; see \cite{HLW, Nilli}.
A $d$-regular graph is called \emph{Ramanujan} when every nontrivial
eigenvalue lies in
\[
  [-2\sqrt{d-1},\,2\sqrt{d-1}].
\]  The first explicit infinite Ramanujan graph  families
were constructed by Lubotzky, Phillips, and Sarnak~\cite{LPS} and
independently by Margulis~\cite{Margulis}.  Morgenstern~\cite{Morgenstern}
subsequently constructed $(q+1)$-regular Ramanujan graphs for every prime
power $q$.  On the probabilistic side, Friedman~\cite{Friedman} proved
Alon's conjecture that random regular graphs are asymptotically almost
Ramanujan.

Bilu and Linial  \cite{BL} introduced a flexible approach based on repeated
$2$-lifts.
Let $G=(V,E)$ be an $n$-vertex graph with adjacency matrix $A$.   A {\it signing} of the edges of $G$ is a map
$\sigma:E\to\{-1,1\},
$
and the corresponding {\it signed adjacency matrix} is the symmetric matrix $A_\sigma$ with
\[
  (A_\sigma)_{uv}=
  \begin{cases}
    \sigma(uv),&\text{ if }uv\in E;\\
    0,&\text{ if }uv\notin E.
  \end{cases}
\]Given a signing $\sigma$ of $G$, denote by $A^+$ and $A^-$  the adjacency matrices of the positive and negative edge subgraphs, which are both symmetric 0-1 matrices.  Thus
\[
  A=A^++A^- ,\qquad A_\sigma=A^+-A^-.
\]
The {\it $2$-lift}  $\widehat G_\sigma$ corresponding to $\sigma$ is the $2n$-vertex graph with adjacency matrix
\[
  \widehat A_\sigma=
  \begin{pmatrix}
    A^+&A^-\\
    A^-&A^+
  \end{pmatrix}.
\]
Since $\widehat A_\sigma$ is orthogonally similar to $A\oplus A_{\sigma}$ through the matrix $
  U=\frac1{\sqrt2}
  \begin{pmatrix}
    I&I\\
    I&-I
  \end{pmatrix}
$, the spectrum of $\widehat A_\sigma$ is the  union of the spectra of $A$ and $A_\sigma$ with multiplicities counted.
This observation motivated the following
conjecture.

\begin{conjecture}[Bilu-Linial~\cite{BL}]\label{conj:BL}
Suppose $d\ge2$ is an integer. Then every $d$-regular graph $G$ has a signing
$\sigma:E(G)\to\{-1,1\}$ such that
\begin{equation}\label{eq:BL}
  \rho(A_\sigma)\le 2\sqrt{d-1}
\end{equation}
where $\rho(A_\sigma)$ denotes the spectral radius of $A_\sigma$.
\end{conjecture}

If this assertion holds at every stage of a lift tower, then repeated good
$2$-lifts of a Ramanujan base graph remain Ramanujan.
  Bilu and Linial  proved that every graph of
maximum degree $d$ has a signing $\sigma$ such that
\begin{equation}\label{eq:BL-original}
  \rho(A_\sigma)=O\!\left(\sqrt{d\log^3 d}\right),
\end{equation}
which led to nearly Ramanujan $2$-lifts and initiated the systematic study of
two-sided spectral signings.

By introducing a method of interlacing polynomials, Marcus, Spielman, and Srivastava \cite{MSS} proved that for every $d$-regular graph, there is a signing $\sigma$ such that
\begin{equation}\label{eq:3}
  \lambda_{\max}(A_\sigma)\le 2\sqrt{d-1},
\end{equation}
where $\lambda_{\max}(A_\sigma)$ denotes the maximum eigenvalue of $A_\sigma$.
Recall that the spectrum of every signed adjacency matrix of a bipartite graph is symmetric about
zero.   In the bipartite case, one can use \eqref{eq:3} to obtain the full Ramanujan
bound and prove the Bilu-Linial conjecture immediately.  Moreover, by  using repeated
good lifts one can generate infinite families of bipartite Ramanujan graphs of
every degree.  Marcus, Spielman, and Srivastava ~\cite{MSS4} subsequently
established bipartite Ramanujan multigraphs of every admissible degree and
size, and Cohen~\cite{Cohen} made the corresponding selection
effective, obtaining deterministic polynomial-time constructions.

Very recently, Xu and Zhang~\cite{XuZhang} used interlacing families of
mixed characteristic polynomials to prove that every graph of maximum
degree $d\ge2$ has a signing such that
\begin{equation}\label{eq:XZ}
  \rho(A_\sigma)\le2\sqrt{3(d-1)}.
\end{equation}
Their result removes the polylogarithmic loss in~\eqref{eq:BL-original} and
gives an explicit two-sided $O(\sqrt d)$ estimate for the eigenvalues of $A_\sigma$.

In this paper, we obtain the following upper bound by applying a different approach.

\begin{theorem}\label{thm:main}
Let $G$ be a finite simple graph of maximum degree $d\ge3$.  Then there exists a signing $\sigma:E(G)\to\{-1,1\}$ such that
\begin{equation}\label{eq:main-bound}
  \rho(A_\sigma)
  \le 4\sqrt{\left\lceil\frac d2\right\rceil-1}.
\end{equation}
Consequently,
\begin{equation}\label{eq:sqrt2-bound}
  \rho(A_\sigma)\le 2\sqrt{2(d-1)}.
\end{equation}
\end{theorem}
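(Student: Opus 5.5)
Our plan follows the abstract's outline: build a bipartite graph from a balanced orientation of $G$, apply the Marcus--Spielman--Srivastava bound to that bipartite graph, and transfer the resulting signing back to $G$.

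\textbf{Step 1 (balanced orientation).} We fix an orientation $D$ of $G$ in which every vertex $v$ has $|\dout(v)-\din(v)|\le 1$. Such an orientation exists by the standard Euler-tour argument: pair up the odd-degree vertices with auxiliary edges, take an Eulerian orientation of each component, and delete the auxiliary edges. Since $\dout(v)+\din(v)\le d$, we get $\dout(v),\din(v)\le \lceil d/2\rceil=:k$ for every $v$.

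\textbf{Step 2 (auxiliary bipartite graph).} Let $B$ be the bipartite graph with parts $V^{+}=\{v^+:v\in V\}$ and $V^{-}=\{v^-:v\in V\}$. For each arc $u\to v$ of $D$ we place one edge $u^+v^-$ in $B$. Then $B$ is simple with maximum degree at most $k$. A signing of $B$ is the same thing as a signing $\sigma$ of $G$, since each edge of $G$ gives exactly one edge of $B$. Let $M$ be the $n\times n$ matrix with $M_{uv}=\sigma(uv)$ if $u\to v$ is an arc and $0$ otherwise. Then
\[
A_{B,\sigma}=\begin{pmatrix}0&M\\ M^{T}&0\end{pmatrix},\qquad A_\sigma=M+M^{T}.
\]
Hence $\rho(A_\sigma)\le 2\|M\|=2\rho(A_{B,\sigma})$, where the last equality holds because the spectrum of $A_{B,\sigma}$ is $\pm$ the singular values of $M$.

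\textbf{Step 3 (bipartite signing).} We need a signing with $\rho(A_{B,\sigma})\le 2\sqrt{k-1}$. Because $B$ is bipartite, its signed spectrum is symmetric, so it suffices to bound $\lambda_{\max}$. The MSS result \eqref{eq:3} is stated for $k$-regular graphs. To handle maximum degree $k$, we first embed $B$ as a subgraph of a $k$-regular bipartite multigraph or graph $B'$: take disjoint copies of $B$ and add edges between deficient vertices of opposite sides across copies, which is the standard regularization for bipartite graphs. We then apply the interlacing-family argument, whose expected characteristic polynomial is the matching polynomial, with root bound $2\sqrt{k-1}$ by Heilmann--Lieb. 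Alternatively, we run the MSS argument directly on $B$: its expected characteristic polynomial is the matching polynomial of $B$, whose roots are at most $2\sqrt{\Delta-1}$ for maximum degree $\Delta$. This direct route avoids the embedding altogether, and it is the version we would use. Restricting to a subgraph is harmless: an induced subgraph's signed matrix is a principal submatrix, so interlacing controls its spectral radius, and the regular copy contains $B$ as an induced subgraph if the added edges only join distinct copies. Combining with Step 2 gives
\[
\rho(A_\sigma)\le 4\sqrt{k-1}=4\sqrt{\lceil d/2\rceil-1}.
\]

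\textbf{Step 4 (the consequence \eqref{eq:sqrt2-bound}).} This reduces to the inequality $16(\lceil d/2\rceil-1)\le 8(d-1)$. For $d$ even the left side is $8d-16\le 8d-8$. For $d$ odd it is $8(d+1)-16=8d-8$, with equality.

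The main obstacle is Step 3. The MSS theorem is stated for regular graphs, so we must justify its use at maximum degree $k$. The cleanest way is to invoke the interlacing-family statement for arbitrary graphs, namely that some signing has $\lambda_{\max}$ at most the largest matching-polynomial root, which is at most $2\sqrt{\Delta-1}$. Bipartite symmetry then upgrades this to a bound on the spectral radius.
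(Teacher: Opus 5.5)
Your proof is correct and follows the paper's argument step for step. The steps are an Euler-tour balanced orientation, then the bipartite graph on two copies of $V(G)$ with $A_\sigma=M+M^{T}$ and $\rho(A_\sigma)\le 2\lVert M\rVert$, then the Marcus--Spielman--Srivastava matching-polynomial/interlacing-family bound applied directly to that bipartite graph of maximum degree $\lceil d/2\rceil$ (the paper's Lemma~\ref{lem2.1} and Corollary~\ref{co2.2}). So the regular-embedding detour in Step 3 is unnecessary, and you should say explicitly that $d\ge 3$ gives $k=\lceil d/2\rceil\ge 2$, which the bound $2\sqrt{k-1}$ requires.
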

Our proof starts from a balanced orientation of G. The key idea is to construct an auxiliary bipartite graph from this orientation and then apply the bipartite form of the Marcus-Spielman-Srivastava signing theorem.

\section{Proofs}

In what follows, graphs are finite, simple, and undirected unless an orientation is explicitly specified.  For a real symmetric matrix $M$, its spectral radius equals its operator norm
\[
  \rho(M)=\lVert M\rVert=\max_i |\lambda_i(M)|.
\]
In a graph $G$, we denote by $d_{G}(u)$ the degree of a vertex $u$. In a digraph $D$, we denote by $d_D^+(u)$ and $d_D^-(u)$ the outdegree and the indegree of $u$, respectively.

We will need the following result, which can be obtained directly by applying Theorems 3.2, 3.7, 4.4 and 5.2 of \cite{MSS}.

\begin{lemma}\label{lem2.1}
Let $G$ be a graph of maximum degree $\Delta\ge2$.  There exists a signing $\tau$ of $G$ such that
\[
  \lambda_{\max}(A_\tau)\le 2\sqrt{\Delta-1}.
\]
\end{lemma}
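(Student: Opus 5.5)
The plan is to follow the interlacing-families argument of Marcus, Spielman, and Srivastava~\cite{MSS}, assembling their Theorems 3.2, 3.7, 4.4 and 5.2 in order. Let $G$ have vertex set of size $n$ and edge set $E=\{e_1,\dots,e_m\}$. For each signing $s\in\{-1,1\}^m$, let $f_s(x)=\det(xI-A_s)$ be the characteristic polynomial of the signed adjacency matrix. We consider $\sigma$ chosen uniformly at random and compare $\lambda_{\max}(A_\sigma)$ with the largest root of the expected polynomial $\mathbb{E}_\sigma f_\sigma(x)$.

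\textbf{Step 1: compute the expected characteristic polynomial.} Expand $\det(xI-A_s)$ as a sum over permutations. A permutation term survives averaging over independent uniform signs only if every edge it uses appears an even number of times. Since the graph is simple, a permutation whose cycles all have length at most $2$ meets this condition, while any longer cycle uses some edge an odd number of times and averages to zero. The surviving permutations are therefore exactly the matchings of $G$, and
\[
\mathbb{E}_\sigma f_\sigma(x)=\mu_G(x)=\sum_{k\ge0}(-1)^k m_k\,x^{n-2k},
\]
where $m_k$ is the number of $k$-edge matchings. This is the Godsil--Gutman identity, i.e.\ Theorem~3.2 of \cite{MSS}.

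\textbf{Step 2: show the $f_s$ form an interlacing family.} Arrange the signings in a binary tree in which the signs $s_1,s_2,\dots$ are fixed one at a time. Each internal node carries the average of the $f_s$ over its completions. It suffices to show that every convex combination $\lambda f_{s_1,\dots,s_k,1,\dots}+(1-\lambda)f_{s_1,\dots,s_k,-1,\dots}$ of partial averages is real-rooted. To do this, write $A_s$ as a sum of rank-one perturbations associated with the edges, with $D$ the diagonal degree matrix and the $v_{uv}$ independent random vectors with the prescribed product distributions:
\[
A_s+D=\sum_{uv\in E}v_{uv}v_{uv}^{T},\qquad v_{uv}=e_u\pm e_v .
\]
The expected characteristic polynomials of such sums are mixed characteristic polynomials, which are real-stable by the real-stability argument (Theorem~5.2). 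This real-stability is the main obstacle of the whole proof. Given it, the interlacing-family lemma (Theorem~3.7) yields a leaf $\tau$ with
\[
\lambda_{\max}(A_\tau)\le \text{largest root of } \mathbb{E}_\sigma f_\sigma=\text{largest root of }\mu_G .
\]

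\textbf{Step 3: bound the roots of the matching polynomial.} By the Heilmann--Lieb theorem in the form of Theorem~4.4, all roots of $\mu_G$ are real and have absolute value at most $2\sqrt{\Delta-1}$ when $G$ has maximum degree $\Delta\ge2$. The proof goes through Godsil's path tree: $\mu_G$ divides the characteristic polynomial of the path tree of $G$. That tree has maximum degree at most $\Delta$, and its spectral radius is below $2\sqrt{\Delta-1}$, as one sees by comparison with the infinite $\Delta$-regular tree or by an explicit weighting of the tree as a Perron vector argument. Combining Steps 2 and 3 gives $\lambda_{\max}(A_\tau)\le 2\sqrt{\Delta-1}$, which is the lemma. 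No regularity of $G$ is needed anywhere, since Step~1 is purely combinatorial and Step~3 only uses the maximum degree.
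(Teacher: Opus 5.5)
Your proposal is correct and is essentially the paper's proof. The paper just invokes the Marcus--Spielman--Srivastava chain that you reconstruct: the Godsil--Gutman identity $\mathbb{E}_\sigma\det(xI-A_\sigma)=\mu_G(x)$, the interlacing-family argument with real-rootedness obtained from real stability, and the Heilmann--Lieb bound $2\sqrt{\Delta-1}$ on the roots of $\mu_G$ via Godsil's path tree.

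One small precision. For irregular $G$ the degree matrix $D$ is not scalar, so the polynomial to control is $\mathbb{E}\det(xI+D-\sum_{uv}v_{uv}v_{uv}^{T})$, not literally a mixed characteristic polynomial of $\sum_{uv}v_{uv}v_{uv}^{T}$. The same real-stability argument, run with the positive semidefinite offset $D$ as in MSS, covers this case.
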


Since the spectrum of every signed adjacency matrix of a bipartite graph is symmetric about
zero, Lemma \ref{lem2.1} implies the following corollary.

\begin{corollary}\label{co2.2}
Let $G$ be a bipartite graph of maximum degree $\Delta\ge2$.  Then there exists a signing $\tau$ of $G$ such that
\[
  \rho(A_\tau)=\lVert A_\tau\rVert\le 2\sqrt{\Delta-1}.
\]
\end{corollary}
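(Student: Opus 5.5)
The plan is to deduce Corollary~\ref{co2.2} directly from Lemma~\ref{lem2.1}. The only extra ingredient is the symmetry of the spectrum of a signed bipartite adjacency matrix about zero, which turns the one-sided bound on $\lambda_{\max}$ into a bound on $\rho$. No new signing is constructed: the signing $\tau$ supplied by Lemma~\ref{lem2.1} is itself the one we want.

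First, apply Lemma~\ref{lem2.1} to the bipartite graph $G$, which has maximum degree $\Delta\ge 2$. This gives a signing $\tau$ with $\lambda_{\max}(A_\tau)\le 2\sqrt{\Delta-1}$.

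Second, verify the spectral symmetry for this $A_\tau$. Let $V(G)=X\cup Y$ be a bipartition and let $D$ be the diagonal matrix with entries $+1$ on $X$ and $-1$ on $Y$. Then $D=D^{-1}=D^{T}$. Every nonzero entry $(A_\tau)_{uv}$ has exactly one of $u,v$ in $X$, so
\[
  (DA_\tau D)_{uv}=D_{uu}(A_\tau)_{uv}D_{vv}=-(A_\tau)_{uv}.
\]
Hence $DA_\tau D=-A_\tau$, so $A_\tau$ and $-A_\tau$ are orthogonally similar. Their spectra therefore coincide with multiplicities, and in particular $\lambda_{\min}(A_\tau)=-\lambda_{\max}(A_\tau)$.

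Third, conclude. Since $A_\tau$ is real symmetric, its operator norm equals its spectral radius:
\[
  \rho(A_\tau)=\lVert A_\tau\rVert=\max\{\lambda_{\max}(A_\tau),-\lambda_{\min}(A_\tau)\}=\lambda_{\max}(A_\tau)\le 2\sqrt{\Delta-1}.
\]

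There is no real obstacle here; the substantive content sits entirely in Lemma~\ref{lem2.1}, which the paper imports from \cite{MSS}. The only point needing care is that the similarity argument uses just the bipartiteness of the underlying graph. It works for every signing and does not depend on $G$ being connected or regular, so it applies to the particular $\tau$ produced by the lemma.
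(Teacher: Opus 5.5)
Your proposal is correct and follows the paper's route: apply Lemma~\ref{lem2.1} and use the fact that the spectrum of a signed bipartite adjacency matrix is symmetric about zero. The only difference is that the paper merely asserts this symmetry, whereas you prove it explicitly via the diagonal similarity $DA_\tau D=-A_\tau$.
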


An {\it orientation} $\omega$ of $G$ is the digraph obtained by assigning a direction to every edge of $G$.  We denote by $\dout_\omega(v)$ and $\din_\omega(v)$   the outdegree and indegree of $v$ in $\omega$, respectively.

Applying a standard Euler-tour construction, we have the following crucial lemma, which    is also a weak form of classical balanced-orientation results; see \cite{NashWilliams}. For the sake of readability, we present a proof here.

\begin{lemma}\label{lem:balanced-orientation}
Every finite graph $G$ has an orientation $\omega$ such that
\begin{equation}\label{eq:bo}
  \bigl|\dout_\omega(v)-\din_\omega(v)\bigr|\le1
  \quad\text{for all }\quad v\in V(G).
\end{equation}
Consequently,
\[
  \max\{\dout_\omega(v),\din_\omega(v)\}
  \le \left\lceil\frac{d_G(v)}2\right\rceil.
\]
\end{lemma}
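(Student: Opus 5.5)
The plan is the standard Euler-tour argument. First reduce to the case where every vertex has even degree. Let $O\subseteq V(G)$ be the set of odd-degree vertices; by the handshake lemma $|O|$ is even. Form an auxiliary graph $G'$ by adding one new vertex $z$ and joining $z$ to every vertex of $O$. If $G$ has multiple components, each component can be treated separately, or the single vertex $z$ can serve all of them. In $G'$, every vertex of $O$ now has even degree, the other vertices of $G$ keep their even degrees, and $z$ has degree $|O|$, which is even. So every vertex of $G'$ has even degree. Allowing $G'$ to fail to be simple is harmless, since only degrees matter, but here $G'$ is in fact simple.

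Next, orient $G'$ using Euler's theorem. Every connected component of $G'$ that contains an edge has all degrees even, so it has a closed Euler trail. Traverse this trail and orient each edge in the direction of travel. Each time the trail passes through a vertex it enters once and leaves once. The closing vertex is also balanced, because the trail is closed. Hence every vertex $v$ of $G'$ satisfies $d^+_{\omega'}(v)=d^-_{\omega'}(v)$. Isolated vertices trivially satisfy this.

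Now let $\omega$ be the restriction of $\omega'$ to $E(G)$, and delete $z$. A vertex $v\notin O$ loses no edges, so its outdegree and indegree are still equal. A vertex $v\in O$ loses exactly one edge, namely $vz$, which removes either one out-arc or one in-arc. The two degrees then differ by exactly $1$, which gives \eqref{eq:bo}.

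For the consequence, write $d^+_\omega(v)+d^-_\omega(v)=d_G(v)$ and $|d^+_\omega(v)-d^-_\omega(v)|\le1$. Then
\[
  \max\{d^+_\omega(v),d^-_\omega(v)\}\le \frac{d_G(v)+1}{2}.
\]
The maximum is an integer, and $\lceil d_G(v)/2\rceil$ is the largest integer not exceeding $(d_G(v)+1)/2$, so the maximum is at most $\lceil d_G(v)/2\rceil$.

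There is no real obstacle here. The only points needing care are the parity of $|O|$, which ensures $z$ has even degree, and the treatment of components. Euler's theorem must be applied componentwise to components with at least one edge, which is immediate.
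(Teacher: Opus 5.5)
Your proof is correct and follows the same route as the paper: add one auxiliary vertex joined to every odd-degree vertex, orient along Euler circuits, then delete the auxiliary vertex. Your treatment of components and your explicit derivation of $\max\{d^+_\omega(v),d^-_\omega(v)\}\le\lceil d_G(v)/2\rceil$ from $d^+_\omega(v)+d^-_\omega(v)=d_G(v)$ just spell out details the paper leaves implicit.
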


\begin{proof}
It suffices to prove the case when $G$ is connected.  If $G$ has no vertex of odd degree, then we can get the required orientation by orienting all edges of $G$ along an Euler circuit.

Suppose $G$ has vertices of odd degree.
We construct a new graph $G'$ by adding one new vertex $u$, and adding a new edge between $u$ and   every odd-degree vertex of $G$.  Then all vertices of $G'$ have even degrees, since the number of odd-degree vertices is even.   Now we orient all edges of $G'$ along an Euler circuit. Then every vertex has equal indegree and  outdegree. Deleting $u$ and its incident arcs in this orientation of $G'$, we get an orientation $\omega$ of $G$ satisfying \eqref{eq:bo}.
\end{proof}

Let $\omega$ be an orientation of $G$. We define   the \emph{orientation bipartition graph} $G_\omega$ as follows. The vertex set is $V(G_{\omega})=V_L\cup V_R$ with $V_L$ and $ V_R$ being two copies of $V(G)$ relabelled by
\[
  V_L=\{u_L:u\in V(G)\},
  \qquad
  V_R=\{u_R:u\in V(G)\}.
\]
The edge set is $$E(G_{\omega})=\{u_Lv_R: \text{ whenever } u\to v \text{ is an arc in } \omega\}.$$
 Thus
\[
  d_{G_\omega}(u_L)=\dout_\omega(u),
  \qquad
  d_{G_\omega}(u_R)=\din_\omega(u).
\]
Set
\[
  \Delta_\omega
  \equiv \max_{u\in V(G)}
  \max\{\dout_\omega(u),\din_\omega(u)\}.
\]
Then $G_\omega$ is bipartite with maximum degree $\Delta_\omega$.

Given an orientation $\omega$ of $G$ and a  signing  $\tau$ of $G_{\omega}$, we can define a signing of $G$ as $$\sigma(uv)=\begin{cases}
 \tau(u_Lv_R),& \text{if } uv\in E(G) \text{ and }  u\to v \text{ is an arc in } \omega; \\
 \tau(v_Lu_R),& \text{if }  uv\in E(G) \text{ and }  v\to u \text{ is an arc in } \omega.
 \end{cases}$$
 Then $\sigma$ can be seen   identical with $\tau$.  Indexing both $V_L$ and $V_R$ by $V(G)$, we define the {\it signed biadjacency matrix} $B_\sigma$ by
\begin{equation}\label{eq:B-def}
  (B_\sigma)_{uv}
  =
  \begin{cases}
    \sigma(uv),&\text{if }u\to v\text{ in }\omega;\\
    0,&\text{otherwise}.
  \end{cases}
\end{equation}
Then the signed adjacency matrix of $G$ is
\begin{equation}\label{eq:symmetrization}
  A_\sigma=B_\sigma+B_\sigma^T,
\end{equation}
and the signed adjacency matrix of $G_{\omega}$ is
\begin{equation}\label{eq:bip-block}
  H_\tau
  =
  \begin{pmatrix}
    0&B_\sigma\\
    B_\sigma^T&0
  \end{pmatrix}.
\end{equation}
Moreover, we have
\begin{equation}\label{eq:block-norm}
  \lVert H_\tau\rVert=\lVert B_\sigma\rVert.
\end{equation}

Given an orientation $\omega$ of $G$, we can bound $\rho(A_\sigma)$ by $\Delta_\omega$ as follows.
\begin{proposition}\label{prop:orientation-bound}
Suppose $G$ has an orientation $\omega$ with $\Delta_\omega\ge2$.  Then $G$ has a signing $\sigma$ such that
\[
  \rho(A_\sigma)\le4\sqrt{\Delta_\omega-1}.
\]
\end{proposition}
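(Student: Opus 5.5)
The plan is to combine Corollary~\ref{co2.2}, applied to the orientation bipartition graph $G_\omega$, with the triangle inequality for operator norms.

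First I will check the hypotheses. By construction, $G_\omega$ is a bipartite graph with maximum degree exactly $\Delta_\omega$, and we assume $\Delta_\omega\ge 2$. So Corollary~\ref{co2.2} gives a signing $\tau$ of $G_\omega$ with
\[
  \lVert H_\tau\rVert\le 2\sqrt{\Delta_\omega-1}.
\]

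Second, I will transfer $\tau$ to a signing $\sigma$ of $G$ by the rule displayed before \eqref{eq:B-def}. Each edge $uv$ of $G$ is oriented in exactly one direction by $\omega$, so it corresponds to exactly one edge of $G_\omega$, and $\sigma$ is well defined. With $B_\sigma$ defined as in \eqref{eq:B-def}:
\begin{itemize}
\item the signed adjacency matrix of $G_\omega$ is the block matrix $H_\tau$ in \eqref{eq:bip-block};
\item by \eqref{eq:block-norm}, $\lVert B_\sigma\rVert=\lVert H_\tau\rVert\le 2\sqrt{\Delta_\omega-1}$.
\end{itemize}
The identity \eqref{eq:block-norm} holds because the nonzero eigenvalues of $H_\tau$ are $\pm$ the singular values of $B_\sigma$.

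Third, I will use the symmetrization \eqref{eq:symmetrization}, $A_\sigma=B_\sigma+B_\sigma^T$. Since $A_\sigma$ is symmetric, its spectral radius equals its operator norm, so
\[
  \rho(A_\sigma)=\lVert B_\sigma+B_\sigma^T\rVert\le \lVert B_\sigma\rVert+\lVert B_\sigma^T\rVert=2\lVert B_\sigma\rVert\le 4\sqrt{\Delta_\omega-1}.
\]

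The only step that needs care is the bookkeeping behind \eqref{eq:symmetrization}. Each undirected edge must appear in exactly one of $B_\sigma$ and $B_\sigma^T$, with the correct sign, so that no entry is double counted. This holds because $\omega$ orients each edge once and $G$ is simple, so $B_\sigma$ and $B_\sigma^T$ have disjoint supports whose union is the support of $A_\sigma$. Everything else is a direct invocation of earlier results.
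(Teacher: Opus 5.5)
Your proposal is correct and follows the paper's proof essentially step for step: you apply Corollary~\ref{co2.2} to $G_\omega$, transfer $\tau$ to $\sigma$, use $\lVert B_\sigma\rVert=\lVert H_\tau\rVert$, and finish with the triangle inequality on $A_\sigma=B_\sigma+B_\sigma^T$. Your added justifications, the singular-value explanation of \eqref{eq:block-norm} and the disjoint supports of $B_\sigma$ and $B_\sigma^T$, make explicit what the paper leaves implicit.
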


\begin{proof}
Applying Corollary~\ref{co2.2} to $G_{\omega}$,  there is a signing $\tau$ of $G_\omega$ such that
\[
  \rho(H_\tau)=\lVert H_\tau\rVert
  \le2\sqrt{\Delta_\omega-1}.
\]
Let $\sigma$ be the corresponding signing of $G$ defined above.
Then
 by~\eqref{eq:block-norm}, we have
\[
  \lVert B_\sigma\rVert=  \lVert H_\tau\rVert
  \le2\sqrt{\Delta_\omega-1}.
\]
Since $A_\sigma$ is symmetric, by~\eqref{eq:symmetrization} and the triangle inequality we have
\[
  \rho(A_\sigma)
  =\lVert A_\sigma\rVert
  =\lVert B_\sigma+B_\sigma^T\rVert
  \le\lVert B_\sigma\rVert+\lVert B_\sigma^T\rVert
  =2\lVert B_\sigma\rVert\le4\sqrt{\Delta_\omega-1}.
\]
\end{proof}

 Now we are ready to present the proof of our main result.

\begin{proof}[Proof of Theorem~\ref{thm:main}]
Applying Lemma~\ref{lem:balanced-orientation}, we can choose a balanced orientation $\omega$ satisfying \eqref{eq:bo}.   If $G$ has maximum degree $d\ge3$, then
\[
  \Delta_\omega
  =\left\lceil\frac d2\right\rceil\ge 2.
\]
By Proposition~\ref{prop:orientation-bound}, we have
\[
  \rho(A_\sigma)
  \le4\sqrt{\Delta_\omega-1}
  \le4\sqrt{\left\lceil\frac d2\right\rceil-1}.
\]
\end{proof}

For regular graphs, by Theorem~\ref{thm:main} we have the following direct comparison with the Bilu-Linial conjecture.

\begin{corollary}\label{cor:regular}
Every finite $d$-regular graph with $d\ge3$ has a signing $\sigma$ such that
\[
  \rho(A_\sigma)
  \le4\sqrt{\left\lceil\frac d2\right\rceil-1}
  \le\sqrt2\,\bigl(2\sqrt{d-1}\bigr).
\]
\end{corollary}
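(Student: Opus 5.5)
The first inequality follows directly from Theorem~\ref{thm:main}, since a $d$-regular graph has maximum degree $d\ge3$. That theorem gives a signing $\sigma$ with $\rho(A_\sigma)\le4\sqrt{\lceil d/2\rceil-1}$. What remains is the purely numerical inequality
\[
  4\sqrt{\left\lceil\frac d2\right\rceil-1}\le 2\sqrt2\sqrt{d-1},
\]
which I would establish by squaring. Both sides are nonnegative, so the inequality is equivalent to
\[
  16\left(\left\lceil\frac d2\right\rceil-1\right)\le 8(d-1),
\]
that is, $2\lceil d/2\rceil-2\le d-1$.

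I would check this by parity. If $d$ is even, then $2\lceil d/2\rceil-2=d-2\le d-1$. If $d$ is odd, then $2\lceil d/2\rceil-2=d+1-2=d-1$, so equality holds. In both cases the inequality is valid, and it gives exactly the stated bound $\sqrt2\,(2\sqrt{d-1})$, which coincides with \eqref{eq:sqrt2-bound}.

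There is no real obstacle here. The only points to watch are that the hypothesis $d\ge3$ is exactly what Theorem~\ref{thm:main} requires (it ensures $\Delta_\omega\ge2$ in Proposition~\ref{prop:orientation-bound}), and that the ceiling is handled correctly in the parity split.
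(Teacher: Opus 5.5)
Your proof is correct and matches the paper, which obtains the corollary directly from Theorem~\ref{thm:main} applied to a $d$-regular graph. The paper leaves the numerical step $4\sqrt{\lceil d/2\rceil-1}\le 2\sqrt{2(d-1)}$ unproved; it appears only as the ``consequently'' clause of the theorem. Your parity check, which amounts to $2\lceil d/2\rceil\le d+1$, supplies exactly that verification.
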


Zejun Huang\\
School of Mathematical Sciences\\
Shenzhen University\\
Shenzhen 518060, China \\
zejunhuang@szu.edu.cn
\end{document}